\newcommand{\editor}{Sudipta Mallik}
\newcommand{\ArticleType}{Research Note} 
\newcommand{\RecievedDate}{Mar 27, 2024}
\newcommand{\RevisedDate}{Nov 16, 2024}
\newcommand{\AcceptedDate}{Nov 18, 2024}
\newcommand{\PublishedDate}{Nov 20, 2024}
\newcommand{\JournalIndex}{Volume 3 (2024), Pages 35--43}
\newcommand{\LastName}{Bapat}
\newcommand{\JournalIndexShort}{\LastName\, et al./ American Journal of Combinatorics 3 (2024) 35--43}
\newtheorem{theorem}{Theorem}[section]
\newtheorem{lemma}[theorem]{Lemma}
\newtheorem{corollary}[theorem]{Corollary}
\theoremstyle{definition}
\newtheorem{example}[theorem]{Example}
\newtheorem{proposition}[theorem]{Proposition}
\numberwithin{equation}{section}
\DeclareMathOperator{\per}{per}
\DeclareMathOperator{\sign}{sgn}
\begin{document}
\setcounter{page}{35}
\noindent {\color{teal}\bf\large American Journal of Combinatorics} \hfill \ArticleType\\
\JournalIndex

\title{Computing the permanental polynomial of $4k$-intercyclic bipartite graphs}

\author{Ravindra B. Bapat, Ranveer Singh, and Hitesh Wankhede*}

\affil{\normalsize\rm (Communicated by \editor)
\vspace*{-24pt}}
\date{}

{\let\newpage\relax\maketitle}

\begin{abstract}
Let $G$ be a bipartite graph with adjacency matrix $A(G)$. The characteristic polynomial $\phi(G,x)=\det(xI-A(G))$ and the permanental polynomial $\pi(G,x) = \per(xI-A(G))$ are both graph invariants used to distinguish graphs. For bipartite graphs, we define the modified characteristic polynomial, which is obtained by changing the signs of some of the coefficients of $\phi(G,x)$. For $4k$-intercyclic bipartite graphs, i.e., those for which the removal of any $4k$-cycle results in a $C_{4k}$-free graph, we provide an expression for $\pi(G,x)$ in terms of the modified characteristic polynomial of the graph and its subgraphs. Our approach is purely combinatorial in contrast to the Pfaffian orientation method found in the literature to compute the permanental polynomial.

\end{abstract}

\renewcommand{\thefootnote}{\fnsymbol{footnote}} 
\footnotetext{\hspace*{-22pt} * Corresponding author\\
MSC2020: 05C31, 05C50, 05C85;
Keywords: Permanental polynomial, $4k$-intercyclic bipartite graph\\
Received \RecievedDate; Revised \RevisedDate; Accepted \AcceptedDate; Published \PublishedDate \\
\copyright\, The author(s). Released under the CC BY 4.0 International License}
\renewcommand{\thefootnote}{\arabic{footnote}}

\section{Introduction and preliminaries}

We consider simple and undirected graphs. Let $G$ be a graph with the vertex set $V(G)=\{v_1,v_2,\dots,v_n\}$. The \emph{adjacency matrix} $A(G)=(a_{i,j})$ of a graph $G$ is defined such that $a_{i,j} = 1$ if $v_i$ and $v_j$ are adjacent and $0$ otherwise where $i, j \in \{1,2,\dots, n\}$. The \emph{determinant} and the \emph{permanent} of $A(G)$, are defined as 
\begin{equation*}
    \det (A(G))=\sum_{\sigma \in S_n}\sign(\sigma)\prod_{i=1}^{n}a_{i,\sigma(i)} \text{ and }\per(A(G))=\sum_{\sigma \in S_n}\prod_{i=1}^{n}a_{i,\sigma(i)},
\end{equation*}
respectively, where $S_n$ is the set of all permutations of the set $\{1,2,\dots, n\}$ and $\sign(\sigma)$ is the signature of the permutation $\sigma$. While the determinant can be computed in polynomial time using the Gaussian elimination method, and the fastest known algorithm runs in $\mathcal{O}(n^{2.371552})$ time \cite{aho1974design,williams2024new}, computing the permanent is notoriously difficult, as it is $\#$P-complete \cite{valiant1979complexity}. The ``Permanent vs. Determinant Problem'' about symbolic matrices in computational complexity theory is as follows: ``Can we express the permanent of a matrix as the determinant of a (possibly polynomially larger) matrix?'' For an upper bound on the size of the larger matrix, see \cite{grenet2011upper}, and for a survey on lower bounds, see \cite{agrawal2006determinant}. The problem  ``Given a $(0,1)$-matrix $A$, under what conditions, changing the sign of some the nonzero entries yields a matrix $B$ such that $\per(A)=\det(B)$?'' is famously known as ``Polya Permanent Problem,'' \cite{polya_aufgabe_1913} and it is equivalent to twenty-three other problems listed in \cite{mccuaig_polyas_2004}. Immanants are matrix functions that generalize determinant and permanent, and their complexity dichotomy was also recently studied \cite{curticapean2021full}. 

The \emph{characteristic polynomial} and the \emph{permanental polynomial} of graph $G$ on $n$ vertices are univariate polynomials defined as 
\begin{equation*}
    \phi(G,x)=\det(xI-A(G))\text{ and } \pi(G,x)=\per(xI-A(G)),
\end{equation*}
respectively, where $I$ is the identity matrix of order $n$. The characteristic and the permanental polynomials are graph invariants, and they can be used to distinguish graphs towards Graph Isomorphism Problem \cite{vandam_haemers}. But the permanental polynomial is not studied in great detail as compared to the characteristic polynomial, probably due to its computational difficulty. Some computational evidence suggests that the permanental polynomial is better than the characteristic polynomial while distinguishing graphs \cite{dehmer2017highly,liu2014enumeration}. We are interested in finding ways to compute the permanental polynomial efficiently; one way to do that is by expressing the permanental polynomial in terms of the characteristic polynomial. For an excellent survey on the permanental polynomial, we refer to \cite{2016permanental}. 

Let
\begin{equation*}
    \phi(G,x)=\sum_{i=0}^{n} a_ix^{n-i}\text{ and } \pi(G,x)=\sum_{i=0}^{n} b_ix^{n-i}.
\end{equation*}
The interpretation of these coefficients is given as
\begin{align}\label{eqn:sachs_interp}
a_i = \sum_{U_i} (-1)^{p(U_i)} 2^{c(U_i)} \text{ \cite{sachs1964beziehungen} }, \;\;
b_i = (-1)^i\sum_{U_i} 2^{c(U_i)} \text{ \cite{merris1981permanental} },
\end{align}
where the summation is taken over all the Sachs subgraphs $U_i$ (subgraphs whose components are either cycles or edges) of $G$ on $i$ vertices, $p(U_i)$ denotes the number of components in $U_i$, and $c(U_i)$ denotes the number of components in $U_i$ that are cycles. It is easy to see the following.
\begin{proposition}\label{prop:bip_coef}\cite{borowiecki1985spectrum, cvetkovic1979spectra}
A graph $G$ is bipartite if and only if $a_i=b_i=0$ for each odd $i$.
\end{proposition}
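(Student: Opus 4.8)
The plan is to argue directly from the Sachs-type coefficient formulas in \eqref{eqn:sachs_interp} together with the classical characterization of bipartiteness by the absence of odd cycles. The one structural fact driving both implications is the following observation about Sachs subgraphs: a Sachs subgraph is a vertex-disjoint union of edges and cycles, where each edge accounts for exactly $2$ vertices and each even cycle for an even number of vertices, so any Sachs subgraph supported on an \emph{odd} number of vertices must contain at least one \emph{odd} cycle. Equivalently, $G$ admits a Sachs subgraph on an odd number of vertices if and only if $G$ contains a cycle of odd length.

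For the forward implication, I would assume $G$ is bipartite. Then $G$ has no odd cycle, so by the observation above $G$ has no Sachs subgraph on an odd number of vertices. Consequently, for every odd $i$ the index sets of both sums in \eqref{eqn:sachs_interp} are empty, and therefore $a_i = b_i = 0$.

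For the reverse implication I would work with the permanental coefficients $b_i$ rather than the $a_i$, the point being that every summand $2^{c(U_i)}$ in the formula for $b_i$ is a strictly positive integer, with no sign present to cause cancellation; hence $b_i = 0$ can occur only when there is no Sachs subgraph on $i$ vertices at all. So, assuming $a_i = b_i = 0$ for all odd $i$, in particular $b_i = 0$ for odd $i$, and thus $G$ has no Sachs subgraph on an odd number of vertices. If $G$ contained an odd cycle $C$, then $C$ by itself would be such a Sachs subgraph, a contradiction; hence $G$ has no odd cycle and is bipartite.

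I do not expect a genuine obstacle here; the only delicacy worth flagging is that the converse cannot be obtained from the hypothesis ``$a_i = 0$ for odd $i$'' in isolation, since the signed sum defining $a_i$ could in principle vanish through cancellation among the terms $(-1)^{p(U_i)} 2^{c(U_i)}$ even when odd-order Sachs subgraphs exist — it is precisely the sign-free nature of the expression for $b_i$ that makes the argument go through. Apart from that, the proof invokes only the standard theorem that a graph is bipartite if and only if it has no cycle of odd length, which I would cite rather than reprove.
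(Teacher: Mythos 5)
Your proof is correct and complete: the key observation that an odd-order Sachs subgraph forces an odd cycle, combined with the sign-free positivity of the terms in the formula for $b_i$, gives both directions cleanly, and your caveat about not relying on the signed sum for $a_i$ alone is well taken. The paper itself offers no proof of this proposition (it says ``it is easy to see'' and cites \cite{borowiecki1985spectrum, cvetkovic1979spectra}), and your argument is exactly the standard one those sources use, so there is nothing to reconcile.
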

Hence, for a bipartite graph $G$, we have
\begin{equation}\label{eqn:bip_exp}
    \phi(G,x)=\sum_{i=0,2,4,\dots} a_{i}x^{n-i}\text{ and } \pi(G,x)=\sum_{i=0,2,4, \dots} b_{i}x^{n-i}. 
\end{equation}
Defining $f_i= b_i - (-1)^{i/2}a_i$ for each nonnegative even integer $i$, we introduce a \textit{modified characteristic polynomial} and also a new graph polynomial for bipartite graphs as 
\begin{equation*}
    \phi_p(G,x)=\sum_{i=0,2,4,\dots} (-1)^{i/2}a_ix^{n-i}\text{ and } f(G,x)=\sum_{i=0,2,4,\dots} f_ix^{n-i},
\end{equation*}
respectively, such that we have 
\begin{equation}\label{eqn:pi=phi+f}
    \pi(G,x) = \phi_p(G,x) + f(G,x). 
\end{equation} 

We denote a cycle of length $k$ by $C_{k}$. For each positive integer $k$, we refer to a $C_{4k}$ by a $4k$\textit{-cycle}. A graph is called $C_{4k}$-\textit{free} if it does not contain a $4k$-cycle for all positive integers $k$. A graph is called $4k$-\textit{intercyclic} if it does not contain two vertex-disjoint $4k$-cycles (these $4k$-cycles could be of different length). Equivalently, after removal of the vertices of any $4k$-cycle from a $4k$-intercyclic graph, the resultant graph is $C_{4k}$-free (see Figure \ref{fig:pap_ex5} for an example). 
\begin{figure}[h!]
     \centering
     \includegraphics[width=0.4\textwidth]{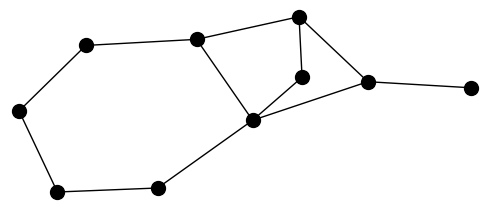}
     \caption{Example of a $4k$-intercyclic bipartite graph.}
     \label{fig:pap_ex5}
 \end{figure}

In 1985, Borowiecki proved the following. 
\begin{theorem}\cite{borowiecki1985spectrum}
    Let $G$ be a bipartite graph with the spectrum $\{\lambda_1,\lambda_2,\dots, \lambda_n\}$. Then, $G$ is $C_{4k}$-free if and only if its per-spectrum is $\{i\lambda_1,i\lambda_2,\dots, i\lambda_n\}$.\footnote{The \textit{spectrum} and the \textit{per-spectrum} of a graph $G$ are the multiset of all roots of its characteristic polynomial and the permanental polynomial, respectively, and $i$ is an imaginary unit.}
\end{theorem}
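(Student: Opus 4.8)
The plan is to translate the per-spectrum condition into the single identity $f(G,x)\equiv 0$ and then read that identity off from the Sachs-type coefficient formulas \eqref{eqn:sachs_interp}.

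First I would identify the polynomial whose root multiset is $\{i\lambda_1,\dots,i\lambda_n\}$, where $i=\sqrt{-1}$. Since $\phi(G,x)=\prod_{j=1}^n(x-\lambda_j)$, we have $\prod_{j=1}^n(x-i\lambda_j)=i^n\,\phi(G,x/i)$, and a short coefficient comparison using the bipartite expansion \eqref{eqn:bip_exp} — each surviving coefficient $a_i$ becomes $(-1)^{i/2}a_i$ — shows $i^n\,\phi(G,x/i)=\phi_p(G,x)$; hence $\phi_p(G,x)=\prod_{j=1}^n(x-i\lambda_j)$. Since $\pi(G,x)$ is monic of degree $n$, the per-spectrum of $G$ equals $\{i\lambda_1,\dots,i\lambda_n\}$ if and only if $\pi(G,x)=\phi_p(G,x)$, and by \eqref{eqn:pi=phi+f} this holds exactly when $f(G,x)\equiv 0$, i.e. when $f_i=0$ for every even $i$.

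Next I would compute $f_i=b_i-(-1)^{i/2}a_i$ combinatorially. In a bipartite graph every cycle has even length, so a Sachs subgraph $U_i$ on $i$ vertices is a disjoint union of $m$ edges and $c=c(U_i)$ cycles of even lengths $\ell_1,\dots,\ell_c$, so that $p(U_i)=m+c$ and $i/2=m+\sum_{s}\ell_s/2$. Substituting, $(-1)^{i/2}(-1)^{p(U_i)}=(-1)^{c}\prod_{s}(-1)^{\ell_s/2}$; splitting the cycle components of $U_i$ into the $c_0=c_0(U_i)$ of length divisible by $4$ (the $4k$-cycles) and the $c_2$ of length $\equiv 2\pmod 4$, one has $\prod_s(-1)^{\ell_s/2}=(-1)^{c_2}$, whence $(-1)^{i/2}(-1)^{p(U_i)}=(-1)^{c_0+2c_2}=(-1)^{c_0(U_i)}$. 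Since $i$ is even, $b_i=\sum_{U_i}2^{c(U_i)}$, so
\[
f_i=\sum_{U_i}\bigl(1-(-1)^{c_0(U_i)}\bigr)2^{c(U_i)}=\sum_{\substack{U_i\\ c_0(U_i)\ \text{odd}}}2^{\,c(U_i)+1},
\]
a sum of strictly positive terms. Hence $f_i=0$ if and only if $G$ has no Sachs subgraph on $i$ vertices containing an odd number of $4k$-cycle components.

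Finally I would connect this with $C_{4k}$-freeness. If $G$ is $C_{4k}$-free then no subgraph of $G$ is a $4k$-cycle, so $c_0(U_i)=0$ for every Sachs subgraph and $f\equiv 0$; conversely, if $G$ contains a cycle $C$ of length $4k$ for some $k\ge 1$, then $C$ alone is a Sachs subgraph on $4k$ vertices with $c_0(C)=1$, so $f_{4k}\ge 2^2>0$. Combined with the first step this gives the theorem. The one genuinely delicate point is the parity bookkeeping in the middle step — noticing that each cycle of length $\equiv 2\pmod 4$ contributes the same sign to $(-1)^{i/2}$ and to $(-1)^{p(U_i)}$ and therefore cancels out of their product, leaving $(-1)^{i/2}(-1)^{p(U_i)}$ dependent only on the number of $4k$-cycle components modulo $2$. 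Once that collapse is established, the nonnegativity of each summand of $f_i$ makes both directions of the equivalence immediate.
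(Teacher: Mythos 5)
Your proof is correct and follows essentially the same route as the paper: the parity bookkeeping showing $(-1)^{i/2}(-1)^{p(U_i)}$ depends only on the number of $4k$-cycle components is exactly the computation in Lemma~\ref{lem:fgx}, and your two directions (nonnegativity of each summand of $f_i$, plus exhibiting a single $C_{4k}$ as a Sachs subgraph) coincide with the paper's proof of the equivalent Corollary~\ref{cor:4kbip}. The only step you make explicit that the paper leaves implicit is the identity $\phi_p(G,x)=\prod_j(x-i\lambda_j)$, which is a routine check.
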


By inspecting the proof of this theorem, we notice that a bipartite graph $G$ is $C_{4k}$-free if and only if $\pi(G,x) = \phi_p(G,x)$ (see Corollary \ref{cor:4kbip}). As a result, the permanental polynomial of $C_{4k}$-free bipartite graphs can be computed directly through the modified characteristic polynomial. Yan and Zhang, in 2004, found that the permanental polynomial of a larger class of bipartite graphs can be computed using the characteristic polynomial of some oriented graph. They proved the following.

\begin{theorem}\cite{yan2004permanental}\label{thm:yan_zhang}
    Let $G$ be a bipartite graph with $n$ vertices containing no subgraph that is an even subdivision of $K_{2,3}$. Then there exists an orientation $G^e$ of $G$ such that  
    $\pi(G, x) = \det(xI - A(G^e)),$ where $A(G^e)$ denotes the skew adjacency matrix of $G^e$. 
\end{theorem}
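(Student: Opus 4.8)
\medskip
\noindent\textbf{Proof plan.}
This is the ``Pfaffian orientation'' type of result, and I would prove it along the lines of the standard Pfaffian‑orientation method, in two stages: first reduce it to a purely combinatorial statement about orientations, then establish that statement via an ear decomposition. Call an orientation $G^{e}$ of $G$ \emph{good} if every cycle $C$ of $G$ is \emph{oddly oriented} in $G^{e}$, meaning that an odd number of the edges of $C$ point in each of the two directions of traversal around $C$; since $|C|$ is even, this is well defined. Equivalently, the product $\epsilon_{C}$ of the entries of the skew adjacency matrix $A(G^{e})$ read around $C$ equals $-1$.

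\smallskip
\noindent\emph{Stage 1: a good orientation computes $\pi(G,x)$.}
Write $\det(xI-A(G^{e}))=\sum_{i=0}^{n}(-1)^{i}E_{i}\,x^{\,n-i}$, where $E_{i}$ is the sum of the $i\times i$ principal minors of $A(G^{e})$. Expanding a principal minor on a vertex set $S$ over the permutations of $S$, the nonzero terms correspond to fixed‑point‑free permutations $\sigma$ of $S$ with each $(v,\sigma(v))$ an edge of $G$; decomposing $\sigma$ into cycles, its transpositions give the edge‑components and its longer cycles the cyclic components of a spanning Sachs subgraph of $G[S]$, and bipartiteness forces each such cyclic component to be an even cycle. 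A transposition contributes $-1$ (an entry times its negative), while an even cycle $C$, traversed in either of its two directions, contributes the same value $\epsilon_{C}$; keeping track of the signs of the permutations one finds that a spanning Sachs subgraph $U$ of $G[S]$ with $c(U)$ cyclic components contributes $(-2)^{c(U)}\prod_{C\subseteq U}\epsilon_{C}$ to the minor, hence $2^{c(U)}$ when $G^{e}$ is good. Summing over all $S$ with $|S|=i$ and over all such $U$ yields $E_{i}=\sum_{U_{i}}2^{c(U_{i})}$, the sum running over all Sachs subgraphs $U_{i}$ of $G$ on $i$ vertices; comparing with the coefficient $b_{i}=(-1)^{i}\sum_{U_{i}}2^{c(U_{i})}$ of $\pi(G,x)$ from \eqref{eqn:sachs_interp} gives $\det(xI-A(G^{e}))=\pi(G,x)$.

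\smallskip
\noindent\emph{Stage 2: a good orientation exists.}
Isolated vertices and the individual blocks of $G$ may be treated separately, since every cycle lies in a single block, so it suffices to take $G$ to be $2$‑connected and to fix an open ear decomposition $G=C_{0}\cup P_{1}\cup\cdots\cup P_{r}$, for which each $H_{j}=C_{0}\cup P_{1}\cup\cdots\cup P_{j}$ is again $2$‑connected. The key point is that each ear $P_{j}$ $(j\ge 1)$ joins two vertices in \emph{different} color classes of $G$: otherwise $2$‑connectedness of $H_{j-1}$ produces two internally disjoint paths in $H_{j-1}$ between the ends of $P_{j}$, and these, together with $P_{j}$, are three internally disjoint paths between two vertices of one color class, i.e.\ an even subdivision of $K_{2,3}$, contrary to hypothesis. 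Now orient $G$ inductively: orient $C_{0}$ with an odd number of its edges agreeing with a fixed sense of traversal, so $C_{0}$ is oddly oriented; when attaching $P_{j}$, pick any path $Q$ in $H_{j-1}$ between its ends and orient the edges of $P_{j}$ so that the cycle $P_{j}\cup Q$ is oddly oriented (possible since $P_{j}$ has at least one edge). Because the interior vertices of $P_{j}$ have degree $2$ in $H_{j}$, every cycle of $H_{j}$ not already in $H_{j-1}$ has the form $P_{j}\cup Q'$ for some path $Q'$ in $H_{j-1}$ between the ends of $P_{j}$; using that $P_{j}$, $Q$, $Q'$ all have odd length and that $Q\,\triangle\,Q'$ is an edge‑disjoint union of cycles of $H_{j-1}$, each oddly oriented by the inductive hypothesis, one checks that $P_{j}\cup Q'$ is oddly oriented as well. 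The cycles already present in $H_{j-1}$ keep their orientation, so $H_{j}$, and finally $G=H_{r}$, is good.

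\smallskip
\noindent\emph{Where the difficulty lies.}
Stage~1 is routine once one trusts the Sachs‑type expansion of the determinant of a skew‑symmetric matrix. The real content is the last part of Stage~2 — showing that pinning down \emph{one} of the new cycles to be oddly oriented forces \emph{all} of them to be. I would derive this from the standard ``cycle‑sum'' lemma: for any orientation and any two cycles $C_{1},C_{2}$ whose symmetric difference is again a single cycle, the parities of the numbers of forward‑directed edges of $C_{1}$, $C_{2}$, and $C_{1}\,\triangle\,C_{2}$ satisfy a fixed linear relation depending only on the lengths of the three cycles (this is exactly the ingredient that also explains why an even subdivision of $K_{2,3}$ is the precise obstruction, the three relevant cycles there being forced to contain an even number of oddly oriented ones). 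Applying it along $Q\,\triangle\,Q'$ is immediate when $Q$ and $Q'$ are internally disjoint; the fiddly point is the general case, where $Q\,\triangle\,Q'$ is a union of several cycles, which I would handle by induction on $|E(Q)\,\triangle\,E(Q')|$, rerouting one path toward the other one cycle at a time.
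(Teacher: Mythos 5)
First, a point of comparison: the paper does not prove this statement at all --- Theorem \ref{thm:yan_zhang} is quoted from Yan and Zhang \cite{yan2004permanental} and used as a black box --- so there is no in-paper proof to measure your proposal against. Judged on its own terms, your Stage 1 is correct and complete in all essentials: in a principal minor of a skew-symmetric $\pm1$ matrix, an edge component of a Sachs subgraph contributes $(-1)\cdot a_{uv}a_{vu}=+1$, an even cycle $C$ contributes $2\cdot(-1)^{|C|-1}\epsilon_C=-2\epsilon_C$, so a good orientation gives $E_i=\sum_{U_i}2^{c(U_i)}$ and $\det(xI-A(G^e))=\sum_i(-1)^iE_ix^{n-i}=\pi(G,x)$ by \eqref{eqn:sachs_interp}. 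The block reduction, the ear decomposition, and the observation that the forbidden subgraph forces every ear to join opposite colour classes are also all sound.

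The genuine gap is in the last step of Stage 2, precisely where you say ``one checks that $P_j\cup Q'$ is oddly oriented as well'': the ingredients you list there ($P_j,Q,Q'$ of odd length, and every cycle of $H_{j-1}$ oddly oriented) do \emph{not} imply the conclusion. Make the cycle-sum lemma quantitative: if two even cycles $C_1=P\cup A$ and $C_2=P\cup B$ meet in a path $P$ and $C_3=A\cup B$ is their symmetric difference, then the number of oddly oriented cycles among $C_1,C_2,C_3$ is odd when $|P|$ is odd and \emph{even} when $|P|$ is even. Now suppose $Q=R\cdot S$ and $Q'=R\cdot S'$ share a prefix $R$ from $x$ to a branch vertex $z$ and then run disjointly to $y$. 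The cycles $P_j\cup Q$ and $P_j\cup Q'$ share the path $P_j\cup R$ of length $|P_j|+|R|$, and their symmetric difference $S\cup S'$ is an oddly oriented cycle of $H_{j-1}$. If $|R|$ is odd, $|P_j|+|R|$ is even and the lemma forces $P_j\cup Q'$ to be \emph{evenly} oriented --- your claimed implication fails. What rescues the argument is a second application of the hypothesis: $|R|$ odd would make $S$, $S'$ and $R^{-1}P_j$ three internally disjoint $z$--$y$ paths, each of even length, i.e.\ an even subdivision of $K_{2,3}$. So the inductive step must invoke the forbidden-subgraph condition again, to control the parity of every branch point between $Q$ and $Q'$ (and likewise at every step of your ``rerouting'' induction), not merely to establish that ears attach at odd distance. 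With that supplied the outline closes; note also that it then differs from Yan and Zhang's original route, which establishes planarity of these graphs and uses a Kasteleyn-type face orientation rather than an ear decomposition.
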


Later Zhang and Li, in 2012, proved the converse of this statement. 
\begin{theorem}\cite{zhang2012computing}\label{thm:zhang_li}
    There exists an orientation $G^e$ of a bipartite graph $G$ such that $\pi(G, x) = \det(xI - A(G^e))$ if and only if $G$ contains no even subdivision of $K_{2,3}$. 
\end{theorem}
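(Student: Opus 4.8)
The plan is to deduce the equivalence from Theorem~\ref{thm:yan_zhang} together with a combinatorial description of which orientations realise $\pi(G,x)$. One direction (``if'') is precisely Theorem~\ref{thm:yan_zhang}: if $G$ has no even subdivision of $K_{2,3}$, then a suitable orientation exists. For the other direction I would prove the contrapositive: if $G$ contains an even subdivision of $K_{2,3}$ as a subgraph, then $\pi(G,x)\neq\det(xI-A(G^e))$ for \emph{every} orientation $G^e$.

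The heart of the argument is a lemma: for a bipartite graph $G$ and an orientation $G^e$, one has $\pi(G,x)=\det(xI-A(G^e))$ if and only if every cycle of $G$ is oddly oriented in $G^e$, meaning that along a traversal of the cycle an odd number of edges point against the traversal (this parity is well defined because every cycle of a bipartite graph is even). To prove it, expand both polynomials by principal minors: the coefficient of $x^{n-i}$ is $(-1)^i\sum_{|S|=i}\per A(G)[S]$ in $\pi(G,x)$ and $(-1)^i\sum_{|S|=i}\det A(G^e)[S]$ in $\det(xI-A(G^e))$. Because $A(G^e)[S]$ is skew-symmetric its determinant is nonnegative, and since $A(G)[S]$ is the entrywise absolute value of $A(G^e)[S]$ the triangle inequality gives $0\le\det A(G^e)[S]\le\per A(G)[S]$ for each $S$; hence equality of the polynomials forces $\det A(G^e)[S]=\per A(G)[S]$ for \emph{every} subset $S$. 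Expanding each of these quantities over permutations, the surviving terms correspond to spanning linear subgraphs $L$ of $G[S]$ (disjoint unions of edges and even cycles), and a sign computation — using that a directed even cycle and its reverse contribute the same monomial — shows that $L$ contributes $2^{q(L)}$ to $\per A(G)[S]$ and $(-1)^{q(L)}2^{q(L)}\prod_{C}\varepsilon(C)$ to $\det A(G^e)[S]$, where $q(L)$ counts the cycle components of $L$ and $\varepsilon(C)=-1$ exactly when $C$ is oddly oriented. Applying the triangle inequality once more inside a fixed $S$, the two match if and only if $(-1)^{q(L)}\prod_{C}\varepsilon(C)=1$ for every $L$; taking $S=V(C)$ and $L=C$ for an arbitrary cycle $C$ yields $\varepsilon(C)=-1$, and conversely $\varepsilon(C)=-1$ for all $C$ makes every such product $1$. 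This proves the lemma.

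The remaining input is a pigeonhole argument. An even subdivision $H$ of $K_{2,3}$ consists of two hub vertices $u,w$ joined by three internally disjoint paths $P_1,P_2,P_3$ of even length, and its only cycles are $C_{ij}=P_i\cup P_j$. In any orientation of $H$, let $p_j\in\{0,1\}$ be the parity of the number of edges of $P_j$ directed from $u$ toward $w$; since each $P_j$ has even length, traversing $C_{ij}$ shows that it is oddly oriented if and only if $p_i\neq p_j$. Three bits cannot be pairwise distinct, so at least one of $C_{12},C_{13},C_{23}$ is not oddly oriented. Therefore, if $G$ contains such an $H$ and $G^e$ is any orientation of $G$, the induced orientation of $H$ leaves some cycle $C$ of $H$ — which is a cycle of $G$ — not oddly oriented, and the lemma gives $\pi(G,x)\neq\det(xI-A(G^e))$. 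As $G^e$ was arbitrary, no orientation works, which is the contrapositive we wanted; combined with Theorem~\ref{thm:yan_zhang} this establishes the stated equivalence.

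I expect the lemma, not the pigeonhole, to be the main obstacle: the passage from the polynomial identity first to equality of every principal permanent/determinant pair, and then to the cycle condition, rests on two applications of the triangle inequality that are valid only because of the nonnegativity and sign structure special to skew-symmetric matrices, and the permutation-expansion sign bookkeeping for $\det A(G^e)[S]$ must be carried out carefully. By contrast, the pigeonhole step is robust: it needs no assumption that $H$ is an induced subgraph of $G$, since all it produces is a cycle of $G$ that is not oddly oriented under $G^e$.
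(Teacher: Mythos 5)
The paper does not prove this statement at all: Theorem~\ref{thm:zhang_li} is quoted from Zhang and Li \cite{zhang2012computing} as a known result (the authors only prove the reformulation, Theorem~\ref{thm:zhangli_reform}, which takes Theorem~\ref{thm:zhang_li} as a black box). So there is no in-paper proof to compare against, and your proposal has to be judged on its own. On that basis it is correct and is essentially the argument from the cited literature. The key lemma is right: expanding $\per(xI-A(G))$ and $\det(xI-A(G^e))$ by principal minors, the nonnegativity of determinants of real skew-symmetric matrices together with $|\det M|\le\per|M|$ forces termwise equality over all principal submatrices, and the permutation-level sign bookkeeping (an edge component contributes $\operatorname{sgn}\cdot\text{product}=(-1)(-1)=+1$; an even cycle contributes $(-1)\cdot\varepsilon(C)$, identically for both traversals) correctly reduces the polynomial identity to the condition that every cycle of $G$ is oddly oriented --- note that because you range over \emph{all} principal minors, you genuinely get \emph{every} cycle, not merely the nice ones, which is what makes the $S=V(C)$, $L=C$ step legitimate. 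The pigeonhole step on the three even $u$--$w$ paths is also sound and, as you observe, needs no induced-subgraph hypothesis. Two small caveats: your argument silently relies on the convention that an even subdivision of $K_{2,3}$ has all three hub-to-hub paths of even length (the paper defers this definition to \cite{yan2004permanental,zhang2012computing}, but that is indeed the standard convention, so the parity computation stands); and the ``if'' direction is delegated entirely to Theorem~\ref{thm:yan_zhang}, which is acceptable here since the paper states that theorem as established, but means your write-up proves only one of the two implications from scratch.
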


For the definition of an even subdivision of a graph, see \cite{yan2004permanental,zhang2012computing}. Zhang and Li also show that bipartite graphs that do not contain an even subdivision of $K_{2,3}$ are planar and admit Pfaffian orientation. They also give characterization and recognition of such graphs, which leads to a polynomial time algorithm to compute the permanental polynomial of such bipartite graphs. Next, we reformulate Theorem \ref{thm:zhang_li}.
\begin{theorem}\label{thm:zhangli_reform}
    There exists an orientation $G^e$ of a graph $G$ such that $\pi(G, x) = \det(xI - A(G^e))$ if and only if $G$ is a bipartite graph containing no even subdivision of $K_{2,3}$. 
\end{theorem}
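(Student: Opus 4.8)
The plan is to observe that Theorem \ref{thm:zhangli_reform} differs from Theorem \ref{thm:zhang_li} only in that the bipartiteness of $G$ is no longer assumed but is instead folded into the conclusion. So the whole task reduces to showing that the mere existence of an orientation $G^e$ with $\pi(G,x)=\det(xI-A(G^e))$ already forces $G$ to be bipartite; once that is in hand, Theorem \ref{thm:zhang_li} supplies the rest of the equivalence. The ($\Leftarrow$) direction is then immediate: if $G$ is bipartite and contains no even subdivision of $K_{2,3}$, Theorem \ref{thm:zhang_li} directly produces the desired orientation.

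For the ($\Rightarrow$) direction, suppose $G^e$ is an orientation with $\pi(G,x)=\det(xI-A(G^e))$. First I would use that the skew adjacency matrix $A(G^e)$ is a real skew-symmetric matrix, hence diagonalizable over $\mathbb{C}$ with purely imaginary eigenvalues occurring in conjugate pairs $\pm i\mu_1,\dots,\pm i\mu_m$ together with a zero eigenvalue of multiplicity $n-2m$. Therefore
\begin{equation*}
  \det(xI-A(G^e)) = x^{\,n-2m}\prod_{j=1}^{m}\bigl(x^2+\mu_j^2\bigr),
\end{equation*}
so the coefficient of $x^{n-i}$ in $\det(xI-A(G^e))$ vanishes for every odd $i$. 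Next I would transfer this to the permanental side: comparing coefficients gives $b_i=0$ for every odd $i$. Since $b_i=(-1)^i\sum_{U_i}2^{c(U_i)}$ has only strictly positive summands, $b_i=0$ means there is no Sachs subgraph of $G$ on $i$ vertices; in particular the same is true for the coefficient $a_i$, and Proposition \ref{prop:bip_coef} then yields that $G$ is bipartite. Finally, applying Theorem \ref{thm:zhang_li} to the (now established) bipartite graph $G$ shows it contains no even subdivision of $K_{2,3}$, which completes the equivalence.

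I do not expect a serious obstacle here; the one point that carries the argument is the linear-algebra fact that the characteristic polynomial of a real skew-symmetric matrix involves only the monomials $x^{n}, x^{n-2}, x^{n-4},\dots$ — this is precisely what excludes odd cycles (equivalently, nonzero $b_i$ for odd $i$) in $G$. Everything else is routine bookkeeping with the Sachs-type coefficient formulas in \eqref{eqn:sachs_interp} and Proposition \ref{prop:bip_coef}.
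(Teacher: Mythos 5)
Your proposal is correct and follows essentially the same route as the paper: both directions reduce to Theorem \ref{thm:zhang_li} once one shows that the existence of such an orientation forces $G$ to be bipartite, which both you and the paper deduce from the fact that a real skew-symmetric matrix has purely imaginary spectrum, so $\det(xI-A(G^e))$ has no $x^{n-i}$ term for odd $i$, whence $b_i=0$ for all odd $i$ and Proposition \ref{prop:bip_coef} applies. Your intermediate observation that $b_i=0$ forces the absence of odd Sachs subgraphs (and hence $a_i=0$ as well) is a slightly more careful justification of the final appeal to Proposition \ref{prop:bip_coef} than the paper gives, but it is the same argument.
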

\begin{proof}
	Suppose that there is an orientation $G^{e}$ such that $\pi(G, x) = \det(xI - A(G^e))$ holds. It is enough to show that $G$ is bipartite. The conclusion of the theorem then follows by Theorem \ref{thm:zhang_li}. Since the skew-adjacency matrix $A(G^e)$ is skew-symmetric, it has purely
	imaginary eigenvalues. Hence, the permanental polynomial can be expressed as $\pi(G,x)= (x-i\lambda_1)(x-i\lambda_2)\dots(x-i\lambda_n)$ for some real numbers $\lambda_1,\lambda_2,\dots, \lambda_n$. When $k$ is odd, the imaginary unit $i$ is a factor of the coefficient $b_k$ of $x^{n-k}$. Since the coefficients of $\pi(G,x)$ must be real, it follows that $b_k=0$ for all odd $k$, and by Proposition \ref{prop:bip_coef}, $G$ is bipartite.
\end{proof}
Theorem \ref{thm:zhangli_reform} suggests that the orientation approach in computing the permanental polynomial only works for the class of bipartite graphs that do not contain an even subdivision of $K_{2,3}$. In this article, we give a formula to compute $\pi(G,x)$ for the class of $4k$-intercyclic bipartite graphs (a superset of the class of $C_{4k}$-free bipartite graphs). This is done by expressing $f(G,x)$ in terms of the modified characteristic polynomial of the subgraphs of $G$. Our approach is combinatorial rather than based on Pfaffian orientation. Note that the class of $4k$-intercyclic bipartite graphs is different from and not a subset of the class of bipartite graphs that do not contain an even subdivision of $K_{2,3}$.

We would also like to mention that our result seem to be in the same spirit as Polya's scheme completed by Galluccio and Loebl in 1999. They proved that the generating function of the perfect matchings of a graph of genus $g$ may be written as a linear combination of $4^g$ Pfaffians, and as a consequence obtained the following result. 
\begin{theorem}\cite{galluccio1999theory}\label{thm:gallu_loeb}
Let $A$ be a square matrix. Then $\per(A)$ may be expressed as a linear combination of terms of the form $\det(A^i)$, $i = 1,\dots, 4^g$, where each $A^i$ is obtained from $A$ by changing the sign of some entries and $g$ is the genus of the bipartite graph corresponding to the biadjacency matrix $A$.
\end{theorem}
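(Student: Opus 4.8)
The plan is to read $\per(A)$ as the generating polynomial of the perfect matchings of an associated weighted bipartite graph, and then run the surface Kasteleyn--Pfaffian machinery that expresses such a polynomial through $4^{g}$ Pfaffians. Concretely, write $A=(a_{i,j})_{i,j=1}^{n}$ and let $H$ be the bipartite graph with colour classes $\{u_1,\dots,u_n\}$ and $\{w_1,\dots,w_n\}$ in which $u_iw_j$ is an edge of weight $a_{i,j}$ exactly when $a_{i,j}\neq0$. A permutation $\sigma\in S_n$ contributes a nonzero summand to $\per(A)$ if and only if $\{u_iw_{\sigma(i)}\}_i$ is a perfect matching of $H$, so $\per(A)=\sum_{M}w(M)$, the sum over all perfect matchings $M$ of $H$ of the product $w(M)$ of the weights of the edges in $M$. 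By multiplicativity over connected components (the genera also adding) we may assume $H$ is connected, and if $H$ has no perfect matching the claim is trivial; fix a cellular embedding of $H$ into the closed orientable surface $\Sigma_{g}$, where $g$ is the genus of $H$.

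Next I would record what one orientation contributes. Given an orientation $D$ of $H$, let $A^{D}$ be obtained from $A$ by negating $a_{i,j}$ precisely when the edge $u_iw_j$ is directed from $w_j$ to $u_i$; then $A^{D}$ is $A$ with the signs of some entries changed, and $\pm\det(A^{D})$ is the Pfaffian of the skew-symmetrisation $\left(\begin{smallmatrix}0 & A^{D}\\ -(A^{D})^{\top} & 0\end{smallmatrix}\right)$ of the adjacency matrix of $H$. Expanding over permutations gives $\det(A^{D})=\sum_{M}\sign_{D}(M)\,w(M)$ with $\sign_{D}(M)\in\{\pm1\}$, and the Kasteleyn sign lemma says that, relative to a fixed reference matching $M_{0}$, the ratio $\sign_{D}(M)\,\sign_{D}(M_{0})$ depends only on the (pairwise disjoint, simple) cycles of $M\triangle M_{0}$, through a local parity count along each cycle of the edges disagreeing with a clockwise traversal under $D$. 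When $g=0$, Kasteleyn's theorem furnishes a single $D$ for which every such sign is $+1$, hence $\per(A)=\pm\det(A^{D})$: one term, $4^{0}=1$.

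Then I would pass to genus $g$. Pick $2g$ simple closed curves on $\Sigma_{g}$ whose classes form a symplectic basis of $H_{1}(\Sigma_{g};\mathbb{Z}/2)$ and that cross edges of $H$ transversely; for each of the $2^{2g}=4^{g}$ subsets $S$ of these curves, reversing every edge of $H$ that crosses a curve in $S$ turns a fixed base orientation $D_{0}$ into an orientation $D_{i}$, and we set $A^{i}:=A^{D_{i}}$. The structural input is that each $D_{i}$ is a Kasteleyn orientation carrying a quadratic form $q_{i}$ on $H_{1}(\Sigma_{g};\mathbb{Z}/2)$ refining the mod-$2$ intersection pairing, with $\det(A^{i})=\delta_{i}\sum_{M}(-1)^{q_{i}(c_{M})}w(M)$, $c_{M}:=[M\triangle M_{0}]$, for a sign $\delta_{i}\in\{\pm1\}$ --- reversing along a curve $\kappa$ changes the local sign of a cycle $\gamma$ by $(-1)^{\langle\kappa,\gamma\rangle}$, and the dependence is only on the homology class $c_{M}$ because the cycles of $M\triangle M_{0}$ are pairwise disjoint and $q_{i}$ is quadratic --- and that the $4^{g}$ orientations realise all $4^{g}$ quadratic refinements of the intersection form. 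The Gauss-sum identity $\sum_{q}(-1)^{\operatorname{Arf}(q)}(-1)^{q(c)}=2^{g}$, valid for \emph{every} $c\in H_{1}(\Sigma_{g};\mathbb{Z}/2)$, then gives
\begin{align*}
\per(A)=\sum_{M}w(M)&=\frac{1}{2^{g}}\sum_{M}w(M)\sum_{i=1}^{4^{g}}(-1)^{\operatorname{Arf}(q_{i})+q_{i}(c_{M})}\\
&=\frac{1}{2^{g}}\sum_{i=1}^{4^{g}}\delta_{i}(-1)^{\operatorname{Arf}(q_{i})}\det(A^{i}),
\end{align*}
a linear combination of the $4^{g}$ determinants $\det(A^{i})$ with coefficients in $\{\pm2^{-g}\}$, as claimed.

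The main obstacle is exactly this last paragraph: one must check that the $2g$ chosen curves do generate, through edge reversals, the full family of $4^{g}$ quadratic refinements of the intersection form, that each resulting orientation is a Kasteleyn orientation (so that $q_{i}$ is genuinely quadratic and depends on $M$ only through $c_{M}$), and that the Arf-weighted Gauss sum collapses to the clean value $2^{g}$ uniformly in $c$. This rests on a careful analysis of how reversing edges along a curve alters the clockwise-parity sign of each cycle (it changes by the mod-$2$ intersection number), on a general-position choice of the curves relative to the embedding, and on the dictionary between Pfaffians of the skew-symmetrised adjacency matrix of $H$ and determinants of sign-modified copies of $A$ via $\operatorname{Pf}\left(\begin{smallmatrix}0 & B\\ -B^{\top} & 0\end{smallmatrix}\right)=\pm\det B$. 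The reductions to a connected $H$ with a cellular genus-$g$ embedding, and the bipartiteness of $H$, are routine.
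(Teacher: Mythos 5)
The paper does not actually prove this statement---it is quoted from Galluccio and Loebl \cite{galluccio1999theory} as background motivation, so there is no in-paper proof to compare against. Your outline reproduces the strategy behind that reference: read $\per(A)$ as the perfect-matching generating function of the support bipartite graph, embed it cellularly in $\Sigma_g$, construct $4^g$ orientations by reversing the edges crossing subsets of a homology basis of curves, and recover the permanent as a signed average of the corresponding determinants (the Gauss-sum identity $\sum_q(-1)^{\operatorname{Arf}(q)+q(c)}=2^g$ you invoke is correct). This is essentially the Galluccio--Loebl route, though they phrase the $4^g$ orientations combinatorially via a polygon-with-handles model rather than through quadratic refinements and the Arf invariant, which is the later Cimasoni--Reshetikhin packaging; either way the combinatorial content is the same. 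The honest caveat, which you flag yourself, is that all of the technical weight---existence of Kasteleyn orientations on $\Sigma_g$, the fact that the relative sign of a matching depends only on the $\mathbb{Z}/2$-homology class of $M\triangle M_0$ and does so through a quadratic form refining the intersection pairing, and the realisation of all $4^g$ refinements by edge reversals---is asserted rather than proved, so what you have is a correct architecture of the argument rather than a self-contained proof.
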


\section{Main result and application}
\begin{theorem}\label{thm:boro_gen}
    Let $G$ be a $4k$-intercyclic bipartite graph. Then, 
    \begin{equation*}
        \pi(G,x) = \phi_p(G,x) + 4 \sum_{R\in \mathcal{C}_{4k}(G)} \phi_p(G\backslash R,x), 
    \end{equation*}
    where $\mathcal{C}_{4k}(G)$ denotes the set of all $4k$-cycles in $G$.
 \end{theorem}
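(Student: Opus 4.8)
The plan is to bypass orientations entirely and instead read the coefficients $f_i = b_i - (-1)^{i/2}a_i$ directly off the Sachs-type formulas in \eqref{eqn:sachs_interp}, with the $4k$-intercyclic hypothesis entering only to control how many $4k$-cycles a Sachs subgraph can contain. By \eqref{eqn:pi=phi+f} it suffices to prove
\[
  f(G,x) = 4\sum_{R\in\mathcal{C}_{4k}(G)}\phi_p(G\backslash R,x).
\]

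First I would isolate the contribution of a single Sachs subgraph $U_i$ of $G$ on $i$ vertices to $f_i$. Since $G$ is bipartite, $i$ is even (Proposition \ref{prop:bip_coef}), so $U_i$ contributes $2^{c(U_i)}$ to $b_i$ and $(-1)^{i/2}(-1)^{p(U_i)}2^{c(U_i)}$ to $(-1)^{i/2}a_i$, hence $2^{c(U_i)}\bigl(1-(-1)^{i/2+p(U_i)}\bigr)$ to $f_i$; this is $0$ when $i/2+p(U_i)$ is even and $2^{c(U_i)+1}$ when it is odd. The next and most delicate step is a parity identity: for any Sachs subgraph $U$ of a bipartite graph, $i/2+p(U)$ has the same parity as the number of cycle-components of $U$ whose length is divisible by $4$ (i.e.\ the number of $4k$-cycles in $U$). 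To see this, write the cycle components of $U$ with lengths $2m_1,\dots,2m_{c(U)}$ (even, by bipartiteness) and note that the $p(U)-c(U)$ edge components account for the remaining $2(p(U)-c(U))$ vertices, so $i/2+p(U)\equiv\sum_t (m_t+1)\pmod 2$, and $m_t+1$ is odd exactly when $2m_t$ is a multiple of $4$. (Specializing this to graphs with no $4k$-cycle at all reproves the equivalence recorded in Corollary \ref{cor:4kbip}.)

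Now the hypothesis enters: a $4k$-intercyclic graph contains no two vertex-disjoint $4k$-cycles, and the components of a Sachs subgraph are pairwise vertex-disjoint, so every Sachs subgraph of $G$ has at most one $4k$-cycle. Combined with the previous step, $U_i$ contributes $0$ to $f_i$ if it has no $4k$-cycle and $2^{c(U_i)+1}$ if it has exactly one. To organize the surviving terms I would use the bijection sending a Sachs subgraph $U$ of $G$ that contains a (necessarily unique) $4k$-cycle $R$ to the pair $(R,\,U\backslash R)$, with inverse $(R,W)\mapsto R\cup W$. The point that makes this clean is that for $R\in\mathcal{C}_{4k}(G)$ the graph $G\backslash R$ is $C_{4k}$-free — precisely the restated form of $4k$-intercyclicity from the preliminaries — so \emph{every} Sachs subgraph $W$ of $G\backslash R$ occurs, and $c(R\cup W)=c(W)+1$. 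Summing over $U_i$ and then over $(R,W)$ gives $f_i = 4\sum_{R\in\mathcal{C}_{4k}(G)}\sum_{W}2^{c(W)}$, the inner sum over Sachs subgraphs $W$ of $G\backslash R$ on $i-|R|$ vertices; since for a bipartite graph $H$ one has $\sum_{W_j}2^{c(W_j)} = b_j(H)$, comparing the coefficient of $x^{n-i}$ in $f(G,x)$ with that of $x^{(n-|R|)-(i-|R|)}$ in $\pi(G\backslash R,x)$ turns this into $f(G,x)=4\sum_{R\in\mathcal{C}_{4k}(G)}\pi(G\backslash R,x)$. Finally $G\backslash R$ is bipartite and $C_{4k}$-free, so $\pi(G\backslash R,x)=\phi_p(G\backslash R,x)$ by Corollary \ref{cor:4kbip}, and the formula follows through \eqref{eqn:pi=phi+f}.

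The main obstacle is really just the parity identity together with the discipline of applying the ``at most one $4k$-cycle'' observation at the level of Sachs subgraphs (vertex-disjoint unions of cycles and edges) rather than merely to the cycles of $G$; once that is in place the remainder is the routine bookkeeping of matching degrees in the polynomial reassembly.
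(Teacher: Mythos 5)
Your proposal is correct and follows essentially the same route as the paper: the same parity computation (the paper's $p(U_i)\equiv i/2+s(U_i)\pmod 2$ is your identity in the form $s+t+r$ versus $p$), the same use of $4k$-intercyclicity to force at most one $4k$-cycle per Sachs subgraph, the same bijection $U\mapsto(R,U\backslash R)$, and the same polynomial reassembly ending with $\pi(G\backslash R,x)=\phi_p(G\backslash R,x)$ for the $C_{4k}$-free remainder. The only cosmetic difference is that you invoke Corollary \ref{cor:4kbip} at the end where the paper reapplies its own formula to $G\backslash R$, and your parity argument already justifies the direction of that corollary you need, so there is no circularity.
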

 To prove this theorem, we need the following lemma.
\begin{lemma}\label{lem:fgx}
	Let $G$ be a bipartite graph. Then, for each nonnegative even integer $i$, we have
    \begin{align*}
    f_i=\sum_{j=1,3,5,\ldots}2^{j+1}\sum_{\substack{U_i \text{} \text{ containing} \\ \text{exactly $j$ } 4k\text{-cycles}}} 2^{t(U_i)},
    \end{align*}
    where $U_i$ denotes a Sachs subgraph on $i$ vertices, and $t(U_i)$ is the number of $(4k+2)$-cycles in $U_i$.
\end{lemma}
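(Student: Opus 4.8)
The plan is to compute $f_i = b_i - (-1)^{i/2}a_i$ directly from the Sachs-coefficient formulas in \eqref{eqn:sachs_interp} and to track how the cycle-length residues mod $4$ control the sign discrepancy between the two. Recall $a_i = \sum_{U_i}(-1)^{p(U_i)}2^{c(U_i)}$ and $b_i = (-1)^i\sum_{U_i}2^{c(U_i)}$; since $i$ is even, $(-1)^i = 1$, so
\begin{equation*}
f_i = \sum_{U_i}\bigl(1 - (-1)^{i/2}(-1)^{p(U_i)}\bigr)2^{c(U_i)}.
\end{equation*}
The bracketed factor is either $0$ or $2$, depending solely on the parity of $i/2 + p(U_i)$; thus only the Sachs subgraphs with $(-1)^{i/2+p(U_i)} = -1$ survive, each contributing $2 \cdot 2^{c(U_i)} = 2^{c(U_i)+1}$.

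Next I would translate the parity condition $(-1)^{i/2+p(U_i)} = -1$ into a statement about the cycle lengths appearing in $U_i$. Write $U_i$ as a disjoint union of $e(U_i)$ edges and cycles of various even lengths (the graph is bipartite, so all cycles are even); say it has $q$ cycles of length $\equiv 0 \pmod 4$ (the $4k$-cycles) and $t = t(U_i)$ cycles of length $\equiv 2 \pmod 4$ (the $(4k+2)$-cycles), so $c(U_i) = q + t$ and $p(U_i) = e(U_i) + q + t$. Counting vertices, $i = 2e(U_i) + \sum(\text{cycle lengths})$; reducing mod $4$, each $4k$-cycle contributes $0$, each $(4k+2)$-cycle contributes $2$, and the edges contribute $2e(U_i)$, so $i \equiv 2e(U_i) + 2t \pmod 4$, i.e. $i/2 \equiv e(U_i) + t \pmod 2$. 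Therefore $i/2 + p(U_i) \equiv (e(U_i)+t) + (e(U_i)+q+t) \equiv q \pmod 2$. Hence the surviving Sachs subgraphs are exactly those with an \emph{odd} number $q = j$ of $4k$-cycles, and the contribution of such a $U_i$ is $2^{c(U_i)+1} = 2^{q+t+1} = 2^{j+1}\cdot 2^{t(U_i)}$. Summing over $j = 1, 3, 5, \ldots$ and over the corresponding Sachs subgraphs gives precisely the claimed formula.

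The main obstacle, such as it is, is the bookkeeping in the vertex-count-mod-$4$ computation: one must be careful that ``edges'' as Sachs components contribute $2$ to both the vertex count and the component count $p$, so their contributions to $i/2$ and to $p(U_i)$ cancel mod $2$, leaving the clean dependence on $q$ alone. I would also note explicitly that this lemma does not use the $4k$-intercyclic hypothesis at all (it holds for every bipartite graph); that hypothesis enters only later, in Theorem \ref{thm:boro_gen}, to guarantee $j=1$ is the only surviving term and to identify $\sum_{U_i \text{ with one } 4k\text{-cycle}} 2^{t(U_i)}$ with the Sachs expansion of $\sum_{R}\phi_p(G\backslash R, x)$. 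So the proof of the lemma itself is a short, self-contained parity argument once the residues are set up correctly.
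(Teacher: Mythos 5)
Your proof is correct and follows essentially the same route as the paper: both reduce $f_i$ to $\sum_{U_i}(1-(-1)^{i/2+p(U_i)})2^{c(U_i)}$ and then show by a vertex-count parity computation that $i/2+p(U_i)$ has the parity of the number of $4k$-cycles in $U_i$ (the paper phrases this as $s(U_i)+t(U_i)+r(U_i)\equiv i/2+s(U_i)\pmod 2$, which is the same identity). Your version merely spells out the mod-$4$ bookkeeping that the paper leaves as a ``check.''
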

\begin{proof}[Proof of Lemma \ref{lem:fgx}]
	In a bipartite graph, there can be two types of cycles: $4k$-cycles or $(4k+2)$-cycles. Let $U_i$ be any Sachs subgraph on $i$ vertices, and $s(U_i)$ and $t(U_i)$ be the number of $4k$-cycles and $(4k+2)$-cycles in it respectively. Then, $U_i$ can be expressed as
    $$ U_i = \{C_{4k_1} \cup \dots \cup C_{4k_{s(U_i)}} \} \cup \{ C_{4l_1+2} \cup \dots \cup  C_{4l_{t(U_i)}+2} \}\cup \underbrace{\{K_2 \cup \dots \cup K_2 \}}_{r(U_i)- \text{times}},$$
    such that $p(U_i)=s(U_i)+t(U_i)+r(U_i)$, $c(U_i)=s(U_i)+t(U_i)$ and $$i = 4(k_1+\dots +k_{s(U_i)}) + 4(l_1+\dots +l_{t(U_i)}) + 2 (t(U_i) + r(U_i)).$$ Check that $s(U_i)+t(U_i)+r(U_i) \equiv i/2 + s(U_i) $ (mod $2$). Using this fact, the coefficients of the characteristic polynomial and the permanental polynomial given in Equation \ref{eqn:sachs_interp} can be written as 
    \begin{equation*}
        (-1)^{i/2} a_i = \sum_{U_i}(-1)^{s(U_i)} 2^{s(U_i)+t(U_i)}\text{ and }
        b_i = \sum_{U_i} 2^{s(U_i)+t(U_i)},
    \end{equation*}
    respectively ($(-1)^i=1$ since $i$ is even). Since $f_i=b_i-(-1)^{i/2}a_i$, we get
    \begin{gather}\label{eqn:fi_forthm}
    f_i =\sum_{U_i} \left ( 1- (-1)^{s(U_i)} \right ) 2^{s(U_i)+t(U_i)} 
    =\sum_{\substack{U_i \text{} \text{ containing an odd } \\\text{number of } 4k\text{-cycles}}} 2^{s(U_i)+1} 2^{t(U_i)}  \\
    =\sum_{j=1,3,5,\ldots}2^{j+1}\sum_{\substack{U_i \text{} \text{ containing} \\ \text{exactly $j$ } 4k\text{-cycles}}} 2^{t(U_i)}.\nonumber
    \end{gather}
Note that the contribution in Equation \ref{eqn:fi_forthm} of the Sachs subgraphs in which we have exactly an even number of $4k$-cycles vanishes. 
\end{proof}
\begin{proof}[Proof of Theorem \ref{thm:boro_gen}]
    Since $G$ is $4k$-intercyclic, the subgraph $G\backslash R$ is $C_{4k}$-free for any $R \in \mathcal{C}_{4k}(G)$. Similarly, any Sachs subgraph $U_i$ of $G$ can contain at most one $4k$-cycle, that is, $s(U_i)\leq 1$. Using Lemma \ref{lem:fgx}, we have \begin{align*}
    f_i=4\sum_{\substack{U_i \text{} \text{ containing} \\ \text{exactly one } 4k\text{-cycle}}} 2^{t(U_i)} = 4
    \sum_{\substack{R \in \mathcal{C}_{4k}(G)}} \sum_{\substack{U_i \\\text{} \text{ containing } R}} 2^{t(U_i)}
    \end{align*}
    for each nonnegative even integer $i$. For any fixed $R \in \mathcal{C}_{4k}(G)$, there is a one-to-one correspondence between the Sachs subgraphs in $G$ containing $R$ and the Sachs subgraphs in $G\backslash R$. To see this, consider a Sachs subgraph $U_i$ in $G$ containing $R$. Let $l_R=i-|V(R)|$ and $W_{l_R} = U_i \backslash R$ be the unique subgraph of $G\backslash R$ corresponding to $U_i$. Since $G$ is $4k$-intercyclic, removing $R$ ensures that $W_{l_R}$ is a Sachs subgraph of $G\backslash R$ since both $G\backslash R$ and $U_i \backslash R$ do not contain $4k$-cycles. Conversely, adding $R$ back to a Sachs subgraph $W_{l_R}$ in $G \backslash R$ uniquely reconstructs a corresponding subgraph $U_i$ in $G$. This establishes a one-to-one correspondence. Moreover, the number of cycles in $W_{l_R}$ is the same as the number $(4k+2)$-cycles in $U_i$, i.e., $c(W_{l_R})=t(U_i)$. As a result, we have
    \begin{align*}
    f_i=4
    \sum_{\substack{R \in \mathcal{C}_{4k}(G)}} \sum_{W_{l_R}} 2^{c(W_{l_R})}.
    \end{align*}
    Now consider the polynomial
    \begin{gather*}
        f(G,x)  = \sum_{i=0,2,4,\dots} f_ix^{n-i} 
        =4\sum_{i=0,2,4,\dots} 
        \sum_{\substack{R \in \mathcal{C}_{4k}(G)}} \sum_{W_{l_R}} 2^{c(W_{l_R})}x^{n-i} \\
        =
        4 \sum_{\substack{R \in \mathcal{C}_{4k}(G)}}  \sum_{l_R=0,2,4,\dots}\sum_{W_{l_R}} 2^{c(W_{l_R})} x^{(n-|V(R)|)-l_R}
        \\=4 \sum_{\substack{R \in \mathcal{C}_{4k}(G)}} \pi(G\backslash R,x ),
    \end{gather*}
    where the second last step follows from the rearrangement of sums, and the last step by Equation \ref{eqn:sachs_interp}, \ref{eqn:bip_exp} and the fact that $G\backslash R$ is a $C_{4k}$-free bipartite graph on $n-|V(R)|$ vertices.
    By Equation \ref{eqn:pi=phi+f}, we conclude 
    \begin{equation*}
        \pi(G,x) = \phi_p(G,x) + 4 \sum_{R\in \mathcal{C}_{4k}(G)} \pi(G\backslash R,x).
    \end{equation*}
    Since $G\backslash R$ is $C_{4k}$-free, the application of this expression to it leads to $\pi(G\backslash R,x) = \phi_p(G\backslash R,x)$ proving the theorem. 
\end{proof}

 \begin{example}
    Consider the $4k$-intercyclic graph $G$ shown in Figure \ref{fig:pap_ex5}. It contains three $4$-cycles and two $8$-cycles, and removal of each of them from the graph yields the following subgraphs: $P_5 \cup K_1$, $P_4 \cup K_2$, $P_4\cup K_1\cup K_1$, $K_2$ and $K_1 \cup K_1$, respectively. Then, using Theorem \ref{thm:boro_gen},
    \begin{gather*}
        \pi(G,x)=\phi_p(G,x) + 4( \phi_p(P_5 \cup K_1,x)+ \phi_p(P_4 \cup K_2,x) + \phi_p(P_4\cup K_1 \cup K_1,x)\\ + \phi_p(K_2,x) + \phi_p(K_1 \cup K_1,x) ).
    \end{gather*}
    We need to do the following computations: $\phi_p(G,x)=x^{10}+12x^8+40x^6+47x^4+18x^2$ + 1, $\phi_p(P_5 \cup K_1,x)=x^6+4x^4+3x^2$, $\phi_p(P_4 \cup K_2,x)=x^6+4x^4+4x^2+1$, $\phi_p(P_4\cup K_1\cup K_1,x)=x^6+3x^4+x^2$, $\phi_p(K_2)=x^2+1$, $\phi_p(K_1 \cup K_1,x) = x^2$. Hence, we get $\pi(G,x)=x^{10}+12x^8+52x^6+91x^4+58x^2+9.$
    Note that Theorem \ref{thm:yan_zhang} and \ref{thm:zhang_li} are not applicable for this graph $G$ as it contains $K_{2,3}$. 
 \end{example}

 The following corollary shows that Theorem \ref{thm:boro_gen} is a generalization of Borowiecki's proof idea for computational purposes at least. 
 
 \begin{corollary}\label{cor:4kbip}\cite{borowiecki1985spectrum}
     A bipartite graph $G$ is $C_{4k} \text{-free}$ if and only if $\pi(G,x)=\phi_p(G,x)$.
 \end{corollary}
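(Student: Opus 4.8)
The plan is to reduce the statement, via Equation \ref{eqn:pi=phi+f}, to the assertion that $f(G,x)=0$ if and only if $G$ is $C_{4k}$-free, and then to read this off directly from Lemma \ref{lem:fgx}. Since $\pi(G,x)=\phi_p(G,x)+f(G,x)$ and $f(G,x)=\sum_{i=0,2,4,\dots} f_i x^{n-i}$, the equality $\pi(G,x)=\phi_p(G,x)$ holds precisely when $f_i=0$ for every nonnegative even integer $i$.

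For the forward implication, suppose $G$ is $C_{4k}$-free. Then $G$ contains no $4k$-cycle at all, so no Sachs subgraph of $G$ can contain a positive, in particular odd, number of $4k$-cycles. Hence, by Lemma \ref{lem:fgx}, the inner sums defining each $f_i$ range over the empty set, so $f_i=0$ for all even $i$, and therefore $\pi(G,x)=\phi_p(G,x)$.

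For the converse I would argue by contraposition. Assume $G$ contains a $4k$-cycle $R$, and write $m=|V(R)|$, so $m\equiv 0\pmod 4$. The subgraph $R$ is itself a Sachs subgraph of $G$ on $m$ vertices, consisting of a single cycle component, containing exactly one $4k$-cycle and no $(4k+2)$-cycle, so $t(R)=0$. Consequently, in the formula of Lemma \ref{lem:fgx} for $f_m$, the $j=1$ term contributes at least $2^{1+1}\cdot 2^{0}=4$. The crucial observation is that \emph{every} summand in that formula is a nonnegative power of $2$, so no cancellation is possible; hence $f_m\ge 4>0$, which yields $f(G,x)\ne 0$ and therefore $\pi(G,x)\ne\phi_p(G,x)$.

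There is essentially no obstacle here beyond that nonnegativity remark: Lemma \ref{lem:fgx} carries all the combinatorial weight, and the one point to be careful about is that the mere presence of a single $4k$-cycle already forces a strictly positive coefficient $f_m$, with no risk that contributions from other Sachs subgraphs cancel it. (Alternatively, for the subclass of $4k$-intercyclic graphs one could deduce the converse from Theorem \ref{thm:boro_gen} by noting that each $\phi_p(G\backslash R,x)$ is a nonzero polynomial with nonnegative coefficients, but the Lemma-based argument above is what is needed for arbitrary bipartite $G$.)
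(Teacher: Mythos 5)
Your argument is correct and is essentially the paper's own proof: both directions reduce to showing $f(G,x)=0$ iff $G$ is $C_{4k}$-free, and the converse uses exactly the same witness (the $4k$-cycle itself as a Sachs subgraph) together with the observation that all summands in the formula for $f_i$ are positive, so no cancellation can occur. The only cosmetic difference is that the paper derives the forward implication from Theorem \ref{thm:boro_gen} while you read it off Lemma \ref{lem:fgx} directly; both are immediate.
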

 \begin{proof}
    The forward implication easily follows from Theorem  \ref{thm:boro_gen}. Suppose $\pi(G,x)=\phi_p(G,x)$ holds, then from Equation \ref{eqn:fi_forthm}, we have
    \begin{align*}
    f_i=\sum_{\substack{U_i \text{} \text{ with an odd }\\\text{number of } 4k\text{-cycles}}} 2^{s(U_i)+1} 2^{t(U_i)}=0,\end{align*}
    for each $i$. Suppose, on the contrary, that $G$ contains a $4k$-cycle for some $k$. Then, there exists a Sachs subgraph $U_{4k}=C_{4k}$, and it contains an odd number of $4k$-cycles. Hence, $f_{4k}\neq 0$, and we get a contradiction which concludes that $G$ is $C_{4k} \text{-free}$. 
 \end{proof}

By Theorem \ref{thm:boro_gen}, the computation of permanental polynomial of a $4k$-intercyclic bipartite graph requires listing all the $4k$-cycles in it. For any graph on $n$ vertices, all cycles of length up to $\log n$ can be found in polynomial time using the color coding method of Alon, Yuster and Zwick \cite{alon1995color}. By an algorithm of Birmelé et al. \cite{birmele2013optimal}, listing all cycles in a graph requires $\mathcal{O}(m + \sum_{c\in \mathcal{C}(G)}|c|)$ time, where $m$ is the number of edges and $\mathcal{C}(G)$ is the set of all cycles. Hence, the permanental polynomial of a $4k$-intercyclic bipartite graph can be computed in polynomial time if the length of the largest $4k$ cycle is bounded by $\log n$ or if the number of cycles is bounded by a polynomial in $n$.

We now discuss an application of Theorem \ref{thm:boro_gen} in constructing cospectral graphs. Recall that two graphs $G_1$ and $G_2$ are said to be \textit{cospectral} if they have the same characteristic polynomial, that is, $\phi(G_1,x)=\phi(G_2,x)$. Similarly, we say that they are \textit{per-cospectral} if they have the same permanental polynomial, that is, $\pi(G_1,x)=\pi(G_2,x)$. Since $\phi_p(G,x)$ can be recovered from $\phi(G,x)$, it follows from Corollary \ref{cor:4kbip} that two $C_{4k}$-free bipartite graphs $G_1$ and $G_2$ are cospectral if and only if they are per-cospectral \cite{borowiecki1985spectrum}. Next, we give a general procedure to construct $4k$-intercyclic bipartite graphs that are simultaneously cospectral as well as per-cospectral. Let $p$ be some polynomial. Then, corresponding to $p$, we define a class of $4k$-intercyclic bipartite graphs $\mathcal{G}_p = \{G \;|\; f(G,x)=p\}$ . 
\begin{theorem}\label{thm:gen}
	Let $G_1, G_2 \in \mathcal{G}_p$ for some polynomial $p$. Then, $G_1$ and $G_2$ are cospectral if and only if they are per-cospectral.  
\end{theorem}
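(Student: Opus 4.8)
The plan is to combine the decomposition $\pi(G,x) = \phi_p(G,x) + f(G,x)$ from Equation \ref{eqn:pi=phi+f} with the elementary fact that, for a bipartite graph, the modified characteristic polynomial $\phi_p(G,x)$ and the characteristic polynomial $\phi(G,x)$ determine one another. Indeed, by Equation \ref{eqn:bip_exp} the only coefficients of $\phi(G,x)$ that can be nonzero are the $a_i$ with $i$ even, and the corresponding coefficients of $\phi_p(G,x)$ are $(-1)^{i/2}a_i$; since the operation $a_i \mapsto (-1)^{i/2}a_i$ merely multiplies each coefficient by $\pm 1$, it is invertible. Hence, for two bipartite graphs of the same order $n$, one has $\phi(G_1,x)=\phi(G_2,x)$ if and only if $\phi_p(G_1,x)=\phi_p(G_2,x)$, Proposition \ref{prop:bip_coef} taking care of the odd-index coefficients, which vanish for bipartite graphs.

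Granting this observation, I would prove both implications simply by adding or subtracting the common polynomial $p$. Suppose first that $G_1$ and $G_2$ are cospectral. Comparing degrees of $\phi(G_1,x)$ and $\phi(G_2,x)$ forces $|V(G_1)| = |V(G_2)| =: n$, so the observation above gives $\phi_p(G_1,x)=\phi_p(G_2,x)$; adding $f(G_1,x)=f(G_2,x)=p$ to both sides and applying Equation \ref{eqn:pi=phi+f} yields $\pi(G_1,x)=\pi(G_2,x)$, i.e.\ $G_1$ and $G_2$ are per-cospectral. Conversely, if $G_1$ and $G_2$ are per-cospectral, comparing degrees of $\pi(G_1,x)$ and $\pi(G_2,x)$ again gives $|V(G_1)|=|V(G_2)|$, and subtracting $p=f(G_1,x)=f(G_2,x)$ from both sides of $\pi(G_1,x)=\pi(G_2,x)$ and applying Equation \ref{eqn:pi=phi+f} gives $\phi_p(G_1,x)=\phi_p(G_2,x)$; the observation above then upgrades this to $\phi(G_1,x)=\phi(G_2,x)$, i.e.\ cospectrality.

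There is no deep obstacle here — notably, the proof does not even invoke Theorem \ref{thm:boro_gen}, which only serves to make the family $\mathcal{G}_p$ tractable by expressing $f(G,x)$ through the $4k$-cycles of $G$. The one point that requires a moment's care is confirming that $G_1$ and $G_2$ must have equal order in each direction, since otherwise the coefficientwise comparison of $\phi_p$ with $\phi$ (which matches coefficients of like powers $x^{n-i}$) would not be legitimate; this is immediate from $\deg\phi(G,x) = \deg\pi(G,x) = |V(G)|$.
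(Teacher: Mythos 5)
Your proposal is correct and follows essentially the same route as the paper's proof: decompose $\pi(G,x)=\phi_p(G,x)+f(G,x)$, cancel the common polynomial $p=f(G_1,x)=f(G_2,x)$, and use the fact that $\phi_p$ and $\phi$ determine each other coefficientwise. The only difference is that you explicitly verify the two graphs have equal order before comparing coefficients, a small point the paper leaves implicit.
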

\begin{proof}
	Write $\pi(G_1,x) = \phi_p(G_1,x) + f(G_1,x)$, and $\pi(G_2,x) = \phi_p(G_2,x) + f(G_2,x)$. But since $G_1, G_2 \in \mathcal{G}_p$, we have $f(G_1,x) = f(G_2,x)$. Hence, $\pi(G_1,x) - \pi(G_2,x) = \phi_p(G_1,x) - \phi_p(G_2,x)$. By definition, $\phi_p(G_1,x) = \phi_p(G_2,x)$ if and only if $\phi(G_1,x) = \phi(G_2,x)$. Then, it follows that $\phi(G_1, x)=\phi(G_2, x)$ if and only if $\pi(G_1, x)=\pi(G_2, x)$.  
\end{proof}
Three such classes of $4k$-intercyclic bipartite graphs are given below. 
\begin{enumerate}
    \item \textit{The class of all $C_{4k}$-free bipartite graphs}. Let $p=0$, then $\mathcal{G}_0 = \{G \; | \; f(G,x)=0\}$. By Corollary \ref{cor:4kbip}, every graph in this class is $C_{4k}$-free.  
    \item \textit{The class of all bipartite graphs with exactly $l$ $C_4$'s and no other cycle such that $G \backslash C$ is an edgeless graph for any cycle $C$, where $l$ is some positive integer}. Let $p=4lx^{n-4}$ for a given $n$ and $l$, then $\mathcal{G}_{4lx^{n-4}}= \{G \; | \; f(G,x)=4lx^{n-4}\}$. By Theorem \ref{thm:boro_gen}, we have $\sum_{R\in \mathcal{C}_{4k}(G)} \phi_p(G\backslash R,x) = lx^{n-4}$. Now the degree $x^{n-4}$ is achieved in $\phi_p(G\backslash R,x)$ only when $R$ is $C_4$. Hence, there are $l$ $C_4$'s in $G$ such that $G \backslash C$ is an edgeless graph for any cycle $C$. 
    \item \textit{The class of all unicyclic bipartite graphs with a $C_4$ such that any two graphs in this class are cospectral after the removal of $C_4$}. This follows easily from Theorem \ref{thm:boro_gen}.
\end{enumerate}
For any such class $\mathcal{G}_p$ of $4k$-intercyclic bipartite graphs, the permanental polynomial is not any more useful than the characteristic polynomial in distinguishing them.

\vspace{24pt}
\noindent {\bf\Large Acknowledgments}\vspace*{6pt}\\
This work is supported by the Department of Science and Technology (Govt. of India) through project DST/04/2019/002676. The second author acknowledges the support from ANRF, SRG/2022/002219. A preliminary version of this article was presented as a poster by the second author at the European Conference on Combinatorics, Graph Theory, and Applications (EuroComb 2023) held in Prague. Part of this work was done while the last author was at IIT Indore. We would like to thank the anonymous reviewers for their constructive feedback, which greatly improved the presentation of this article. One of these comments brought to our attention a result by Galluccio and Loebl (Theorem \ref{thm:gallu_loeb}). We also thank Noga Alon and George Manoussakis for helpful email conversation on listing cycles.


\bigskip
{\bf \large Contact Information}\\

\begin{tabular}{lcl}
Ravindra B. Bapat	&& Indian Statistical Institute,\\
rbb@isid.ac.in && New Delhi 110016, India.\\
	&& \includegraphics[height=10pt]{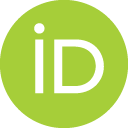}  \url{https://orcid.org/0000-0002-1127-2912}\\
				&& \\
Ranveer Singh	&& Indian Institute of Technology Indore,\\
ranveer@iiti.ac.in && Indore 453552, India. \\
	&& \includegraphics[height=10pt]{ORCIDiD_icon128x128.png}  \url{https://orcid.org/0000-0002-4566-0331}\\
					&& \\
Hitesh Wankhede	&& The Institute of Mathematical Sciences (HBNI),\\
hiteshwankhede9@gmail.com &&  Chennai 600113, India.\\&&\includegraphics[height=10pt]{ORCIDiD_icon128x128.png}  \url{https://orcid.org/0000-0003-3431-6305}\\
\end{tabular}

\end{document}